\newtheorem{thm}{Theorem}[section]
\newtheorem{lem}[thm]{Lemma}
\newtheorem{prop}[thm]{Proposition}
\theoremstyle{definition}
\theoremstyle{remark}
\newtheorem{eg}[thm]{Example}
\numberwithin{equation}{section}
\newcommand{\sa}{{\rm sa}}
\newcommand{\ti}{\tilde}
\newcommand{\RP}{\mathbb{R}_+}
\newcommand{\BC}{\mathbb{C}}
\newcommand{\BN}{\mathbb{N}}
\newcommand{\BR}{\mathbb{R}}
\newcommand{\CF}{\mathcal{F}}
\newcommand{\CI}{\mathcal{I}}
\newcommand{\KS}{\mathfrak{S}}
\newcommand{\KH}{\mathfrak{H}}
\newcommand{\supp}{\mathrm{supp}\ \!}
\begin{document}

\title[]{The positive contractive part of a Noncommutative $L^p$-space is a complete Jordan invariant}

\author{Chi-Wai Leung \and Chi-Keung Ng \and Ngai-Ching Wong}

\address[Chi-Wai Leung]{Department of Mathematics, The Chinese
	University of Hong Kong, Hong Kong.}
\email{cwleung@math.cuhk.edu.hk}

\address[Chi-Keung Ng]{Chern Institute of Mathematics and LPMC, Nankai University, Tianjin 300071, China.}
\email{ckng@nankai.edu.cn}

\address[Ngai-Ching Wong]{Department of Applied Mathematics, National Sun Yat-sen University,  Kaohsiung, 80424, Taiwan.}
\email{wong@math.nsysu.edu.tw}

\date{\today}

\keywords{non-commutative $L^p$-spaces; positive contractive elements; metric spaces; bijective isometries}

\subjclass[2010]{Primary: 46L10, 46L52; Secondary: 54E35}

\begin{abstract}
Let $1\leq p \leq +\infty$. 
We show that the positive part of the closed unit ball of a non-commmutative $L^p$-space, as a metric space, is a complete Jordan $^*$-invariant for the underlying von Neumann algebra.
\end{abstract}

\maketitle

\section{Introduction}


Given a von Neumann algebra $M$,  celebrated results of R. V. Kadison showed that several partial structures of $M$ can recover the von Neumann algebra
up to Jordan $^*$-isomorphisms.
In particular, each of the following is a complete Jordan $^*$-invariant of $M$: the Banach space structure of the self-adjoint part $M_\sa$ of $M$ (\cite[Theorem 2]{Kad52}),
the ordered vector space structure of $M_\sa$ (\cite[Corollary 5]{Kad52}) and 
the topological convex set structure of the normal state space of $M$
(\cite[Theorem 4.5]{Kad65}).


Let $p\in [1,+\infty]$, and let $L^p(M)$ be the non-commutative $L^p$-space associated to $M$ with
the canonical cone $L^p(M)_+$.
If $M$ is semi-finite, P.-K. Tam showed in \cite{Tam} that the ordered Banach space $(L^p(M)_\sa, L^p(M)_+)$
characterises $M$ up to Jordan $^*$-isomorphisms.
In the case when $M$ is $\sigma$-finite (but not necessarily semi-finite) and $p=2$, the corresponding result follows from 
a result of A. Connes (namely, \cite[Th\'{e}or\`{e}me 3.3]{Connes-Cara}).
On the other hand, extending results of B. Russo (\cite{Russo68})  and F. J.  Yeadon (\cite{Yeadon81}),
D. Sherman showed in \cite{Sher05} that the Banach space  $L^p(M)$ is also a complete Jordan $^*$-invariant for a general von Neumann algebra $M$ when $p\neq 2$.


Along this line, we show in this article that the underlying metric space structure of the positive contractive part
$$
{L^p(M)}^1_{+}:= L^p(M)_+\cap {L^p(M)}^1 \qquad (1\leq p\leq +\infty)
$$ 
of $L ^p(M)$ is also a complete Jordan $^*$-invariant of $M$, where ${L^p(M)}^1$ is the closed unit ball.
More precisely, we will show in Theorem \ref{thm:main} that two arbitrary von Neumann algebras $M$ and $N$ are Jordan $^*$-isomorphic
whenever there exist a bijective isometry $\Phi$ from $L^p(M)_+^1$ onto $L^p(N)_+^1$, i.e.,
$$
\|\Phi(x)-\Phi(y)\| = \| x-y\| \qquad (x,y\in  L^p(M)_+^1).
$$
Notice that the closed unit ball ${L^2(M)}^1$ itself is not a complete Jordan $^*$-invariant (since for any infinite dimensional von Neumann algebra $M$ with a separable predual, one has $L^2(M) \cong \ell^2$), but its positive part is a Jordan $^*$-invariant.

\section{Preliminaries}


Throughout this article, if $E$ is a subset of a normed space $X$ and $\lambda > 0$, we set
$$E^\lambda:= \{x\in E: \|x\| \leq \lambda\}.$$
In the following, we will briefly recall (mainly from \cite{RX03}) notations concerning non-commutative $L^p$-spaces.
Let $M$ be a (complex) von Neumann algebra on a (complex) Hilbert space $\KH$ and $\alpha: \BR \to {\rm Aut}(M)$ be the modular automorphism group.
Then the von Neumann algebra crossed product $\check M :=M\bar\rtimes_\alpha \BR$ is semi-finite and we fix a normal faithful semi-finite trace $\tau$ on $\check M$.
The \emph{measure topology} on $\check M$ (as introduced by E. Nelson in \cite{Nels74}) is given by a neighborhood basis at $0$ of the form
$$U(\epsilon, \delta):= \{x\in \check M: \|xp\|\leq \epsilon \text{ and } \tau(1-p)\leq \delta, \text{ for a projection }p\in \check M\}.$$
The completion, $L_0(\check M, \tau)$, of $\check M$ with respect to this topology is a $^*$-algebra extending the $^*$-algebra structure on $\check M$.


One may identify $L_0(\check M, \tau)$ with a collection of closed and densely defined operators on $L^2(\BR;\KH)$ affiliated with $\check M$.
More precisely, suppose that $T$ is such a closed operator on $L^2(\BR;\KH)$ and that $|T|$ is the absolute value of $T$ with the spectral measure $E_{|T|}$.
Then $T$ corresponds (uniquely) to an element in $L_0(\check M, \tau)$ if and only if $\tau \big(1 - E_{|T|}([0,\lambda])\big) < +\infty$ when $\lambda$ is large enough.
In this case, the $^*$-operation on $L_0(\check M, \tau)$ coincides with the adjoint. 
Moreover, the addition and the multiplication on $L_0(\check M, \tau)$ are the closures of the corresponding operations for densely defined closed operators. 
We denote by $L_0(\check M, \tau)_+$ the set of all positive self-adjoint (but not necessarily bounded) operators in $L_0(\check M, \tau)$.


The dual action $\hat \alpha: \BR \to {\rm Aut}(\check M)$ of $\alpha$ extends to an action on $L_0(\check M, \tau)$ by $^*$-automorphisms.
For any $p\in [1,+\infty]$, we set, as in the literature, 
$$L^p(M):= \{T\in L_0(\check M, \tau): \hat \alpha_s(T) = e^{-s/p}T, \text{ for all }s\in \BR\}.$$
Denote by $L^p(M)_\sa$ the set of all self-adjoint operators in $L^p(M)$ and put
$$L^p(M)_+ := L^p(M)\cap L_0(\check M, \tau)_+.$$
If $T\in L_0(\check M, \tau)$ and $T= u|T|$ is the polar decomposition, then $T\in L^p(M)$ if and only if $u\in M$ and $|T|\in L^p(M)$.


In the case when $p\in (1,+\infty)$, the map that sends $x\in \check M_+$ to $x^p$ extends to a map
$$\Lambda_p: L_0(\check M, \tau)_+ \to L_0(\check M, \tau)_+.$$
For any $T\in L_0(\check M, \tau)_+$, one has $T\in L^p(M)$ if and only if $\Lambda_p(T)\in L^1(M)$.
There is a canonical identification of $M_*$ with $L^1(M)$ that sends the positive part $M_{*,+}$ of $M_*$ onto $L^1(M)_+$, and this induces a Banach space norm $\|\cdot\|_1$ on $L^1(M)$.
The function defined by
\begin{equation}\label{eqt:defn-Lp-norm}
\|T\|_p:= \|\Lambda_p(|T|)\|_1^{1/p}
\end{equation}
is a norm on $L^p(M)$ that turns it into a Banach space.
On the other hand,  one may identify $M$ with $L^\infty(M)$ (as ordered Banach spaces) 
through the canonical inclusion $M\subseteq \check M \subseteq L_0(\check M, \tau)$.


\section{Results and Questions}

\subsection{The case of $p=+\infty$}

\begin{prop}\label{prop:p=infty}
If $\Phi:M_+^1\to N_+^1$ is a bijective isometry, then $\Psi: x \mapsto \Phi(x + \frac{1}{2}) - \frac{1}{2}$ extends to a linear isometry from $M_\sa$ onto $N_\sa$.
\end{prop}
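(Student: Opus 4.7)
My plan is to identify $\Psi$ as a bijective isometry between balls centred at $0$ in the real normed spaces $M_\sa$ and $N_\sa$, and then invoke the Mankiewicz extension theorem.

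The first step is to establish the metric identity $\Phi(\tfrac{1}{2}\cdot 1_M)=\tfrac{1}{2}\cdot 1_N$, so that $\Psi(0)=0$. The key observation is that $\tfrac{1}{2}\cdot 1_M$ is characterised purely metrically inside $M_+^1$ as the unique $y\in M_+^1$ with $\|y-x\|\le\tfrac{1}{2}$ for every $x\in M_+^1$. Existence is the estimate $\|x-\tfrac{1}{2}\cdot 1_M\|\le\tfrac{1}{2}$ for $0\le x\le 1_M$. For uniqueness, if $y\in M_+^1$ has this property, then testing against $x=0$ gives $\|y\|\le\tfrac{1}{2}$ and hence $y\le\tfrac{1}{2}\cdot 1_M$ (since $y\ge 0$), while testing against $x=1_M$ gives $\|1_M-y\|\le\tfrac{1}{2}$ and hence $y\ge\tfrac{1}{2}\cdot 1_M$ (since $y\le 1_M$). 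Because this characterisation refers only to the distance function on $M_+^1$ and to $M_+^1$ itself, it is preserved by the surjective isometry $\Phi$, and the analogous uniqueness in $N_+^1$ forces $\Phi(\tfrac{1}{2}\cdot 1_M)=\tfrac{1}{2}\cdot 1_N$.

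Once $\Psi(0)=0$ is known, $\Psi$ is a bijective isometry from the closed ball $\{x\in M_\sa:\|x\|\le\tfrac{1}{2}\}$ onto $\{y\in N_\sa:\|y\|\le\tfrac{1}{2}\}$. These are convex bodies with non-empty interior in the real normed spaces $M_\sa$ and $N_\sa$, so Mankiewicz's extension theorem produces a (unique) bijective affine isometry $\widetilde{\Psi}\colon M_\sa\to N_\sa$ extending $\Psi$. Since $\widetilde{\Psi}(0)=\Psi(0)=0$, the affine isometry $\widetilde{\Psi}$ is linear, and its linearity combined with its bijectivity on a ball forces bijectivity onto all of $N_\sa$—giving the desired linear isometric extension.

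The main obstacle I anticipate is pinning down the precise form of Mankiewicz's theorem to apply: the original 1972 statement is for open convex subsets, and one either restricts $\Psi$ to the open half-balls and extends by continuity, or cites a version already formulated for convex bodies. Everything else is elementary, and the delicate point is that the metric identification of $\tfrac{1}{2}\cdot 1_M$ must be phrased so as to depend only on data preserved by $\Phi$—which the uniqueness characterisation above does.
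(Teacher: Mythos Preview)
Your argument is correct and follows the same overall architecture as the paper: identify $M_+^1$ with the closed ball $M_\sa^{1/2}$ via the translation $x\mapsto x-\tfrac{1}{2}$, show that $\Psi$ is a bijective isometry between the closed half-balls, and then invoke Mankiewicz's theorem (the paper's Lemma~\ref{lem:Mank}) on the open interiors to obtain the linear extension.

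The one genuine difference is the metric invariant you isolate. The paper characterises the \emph{boundary sphere} of $M_\sa^{1/2}$ (as the set of points admitting an antipode at distance $1$), deduces that $\Psi$ maps open ball onto open ball, and then asserts $\Psi(0)=0$. You instead characterise the \emph{centre} $\tfrac{1}{2}\cdot 1_M$ directly as the Chebyshev centre of $M_+^1$, obtaining $\Psi(0)=0$ first; norm preservation $\|\Psi(x)\|=\|x\|$ then follows immediately from the isometry condition, and with it the open-ball-to-open-ball restriction needed for Mankiewicz. Your route is arguably tidier on the linearity issue, since $\Psi(0)=0$ is exactly what converts the affine Mankiewicz extension into a linear one, and you establish it up front rather than leaving it implicit. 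Both arguments are short and elementary; neither buys anything the other does not.

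Your closing remark about the open-versus-closed formulation of Mankiewicz is apt: the paper's Lemma~\ref{lem:Mank} is stated for open connected sets, so one does need to pass to the open half-balls first (which your $\Psi(0)=0$ makes immediate) and then recover $\Psi$ on the closure by continuity, exactly as you describe.
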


We may then conclude from \cite[Theorem 2]{Kad52} that $x\mapsto \Psi(1)\Psi(x)$ is a Jordan $^*$-isomorphism. 
In order to establish this proposition, we need the following stronger version of the Mazur-Ulam theorem, which was first proved in \cite[Theorem 2]{Mank} (see also \cite[Theorem 14.1]{BL}). 


\begin{lem}\label{lem:Mank}
Let $U$ be a non-empty open connected subset of a  normed space $X$ and $W$ be an open subset of a   normed space $Y$.
Then every isometry from $U$ onto $W$ can be extended uniquely to an affine isometry from $X$ onto $Y$.
\end{lem}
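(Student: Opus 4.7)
The plan is to carry out the standard Mankiewicz--style extension of the Mazur--Ulam theorem, which proceeds in three stages: $(i)$ show that $\Phi$ is affine on every sufficiently small open ball centered in $U$; $(ii)$ patch these local affine descriptions together using connectedness of $U$ to obtain a single affine isometric extension $\tilde\Phi$ defined on all of $X$; and $(iii)$ verify that $\tilde\Phi$ is surjective onto $Y$. Uniqueness of the extension is automatic: any two affine maps agreeing on the non-empty open set $U$ coincide.

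For stage $(i)$ I would fix $x_0\in U$ and, using openness of $U$, openness of $W$, and continuity of the isometry $\Phi$, choose $r>0$ with $\overline{B}(x_0,2r)\subset U$ and $\overline{B}(\Phi(x_0),2r)\subset W$. The key claim is midpoint preservation: $\Phi\bigl((a+b)/2\bigr)=\bigl(\Phi(a)+\Phi(b)\bigr)/2$ for all $a,b\in B(x_0,r/2)$. I would derive this from the classical purely metric characterization of the affine midpoint in a normed space. Define $M_0(a,b):=\{z\in X:\|z-a\|=\|z-b\|=\|a-b\|/2\}$ and inductively $M_{n+1}(a,b):=\{z\in M_n(a,b):\|z-w\|\le \tfrac12\,{\rm diam}\,M_n(a,b)\ \text{for all } w\in M_n(a,b)\}$. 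Then ${\rm diam}\,M_n(a,b)\le 2^{-n}\|a-b\|$ and $\bigcap_n M_n(a,b)=\{(a+b)/2\}$. With our scale choice we have $M_0(a,b)\subset \overline{B}((a+b)/2,\|a-b\|/2)\subset B(x_0,2r)\subset U$, so the whole iteration stays inside $U$. Since $\Phi$ is an isometry, it maps each $M_n(a,b)$ bijectively onto $M_n(\Phi(a),\Phi(b))$, and taking intersections gives the midpoint identity. Continuity then upgrades midpoint preservation to affineness on $B(x_0,r/2)$: there is a linear isometric embedding $T_{x_0}:X\to Y$ with $\Phi(y)=\Phi(x_0)+T_{x_0}(y-x_0)$ for $y\in B(x_0,r/2)$.

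For stage $(ii)$, whenever two such local balls overlap inside $U$, the two affine representations of $\Phi$ on the overlap force $T_{x_0}=T_{x_1}$. The relation ``$T_x=T_{x'}$'' is therefore locally constant in $x\in U$; by connectedness of $U$ it is constant, equal to a single linear isometry $T:X\to Y$. Fixing any base point $x_0\in U$, the formula $\tilde\Phi(y):=\Phi(x_0)+T(y-x_0)$ defines an affine isometry $X\to Y$ agreeing with $\Phi$ on $U$.

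For stage $(iii)$, the image $\tilde\Phi(X)=\Phi(x_0)+T(X)$ is a closed affine subspace of $Y$ that contains the non-empty open set $W=\Phi(U)$; no proper closed affine subspace of a normed space can have non-empty interior, so $T(X)=Y$ and $\tilde\Phi$ is surjective. The main obstacle lies entirely in stage $(i)$: the classical Mazur--Ulam argument relies on globally defined point reflections, and adapting it to a local setting requires keeping the iterated metric midpoint sets inside the domain. This is exactly what the separation of scales ($B(x_0,r/2)$ for the inputs versus $B(x_0,2r)$ as the safe region) achieves; once that containment is under control, the remaining steps are essentially formal.
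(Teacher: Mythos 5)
The paper gives no proof of this lemma; it is quoted verbatim from Mankiewicz's theorem (\cite[Theorem 2]{Mank}, see also \cite[Theorem 14.1]{BL}), so the only comparison available is with the literature. Your reconstruction follows the standard Mankiewicz strategy and is essentially correct: the scale separation $B(x_0,r/2)$ versus $\overline{B}(x_0,2r)$ (and, importantly, $\overline{B}(\Phi(x_0),2r)\subset W$ on the target side, which you do include) keeps the iterated metric midpoint sets $M_n(a,b)$ and $M_n(\Phi(a),\Phi(b))$ inside $U$ and $W$ respectively, and surjectivity of $\Phi$ onto $W$ is exactly what lets you conclude $\Phi(M_n(a,b))=M_n(\Phi(a),\Phi(b))$. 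Two small points deserve tightening. First, in stage (ii) the constancy of $x\mapsto T_x$ only gives $\Phi(y)=\Phi(x)+T(y-x)$ \emph{locally near each} $x$; to get that $\tilde\Phi$ agrees with $\Phi$ on all of $U$ you need one more connectedness argument, namely that the offset $x\mapsto \Phi(x)-T(x-x_0)$ is locally constant, hence constant, on $U$. Second, in stage (iii) the claim that $\tilde\Phi(X)$ is a \emph{closed} affine subspace is unjustified for a general (non-complete) normed space $X$, but it is also unnecessary: a proper affine subspace of a normed space, closed or not, has empty interior, so containing the nonempty open set $W$ already forces $T(X)=Y$. With these repairs the argument is complete and matches the cited result.
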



\noindent \emph{Proof of Proposition \ref{prop:p=infty}.}\ 
	Let us first note that for any $x\in M_\sa$, one has $x\in M_+^1$ if and only if $\|x - \frac{1}{2}\| \leq \frac{1}{2}$ (by considering the $C^*$-subalgebra generated by $x$ and $1$).
	Thus, $x\mapsto x - \frac{1}{2}$ is a bijective isometry from $M_+^1$ onto $M_\sa^{\frac{1}{2}}$ and the map $\Psi$ in the statement is a bijective isometry from $M_\sa^{\frac{1}{2}}$ onto $N_\sa^{\frac{1}{2}}$.
	
	If $x\in M_\sa^{\frac{1}{2}}$, then $\|x\| = \frac{1}{2}$ if and only if there exists $x'\in M_\sa^{\frac{1}{2}}$ with $\|x-x'\| = 1$.
	This implies 
$$
\Psi\big(\{x\in M_\sa: \|x\| = 1/2\}\big) = \{y\in N_\sa: \|y\| = 1/2\}.
$$
	Consequently,  $\Psi(0) = 0$ and $\Psi$ will send the interior, $B_M\left(0,\frac{1}{2}\right)$, of $M_\sa^{\frac{1}{2}}$ onto the interior of $N_\sa^{\frac{1}{2}}$.
	By Lemma \ref{lem:Mank}, $\Psi|_{B_M\left(0,\frac{1}{2}\right)}$ extends to a linear isometry $\ti \Psi$ from $M_\sa$ onto $N_\sa$ and the continuity of $\Psi$ tells us that $\ti \Psi|_{M_\sa^{\frac{1}{2}}} = \Psi$. 
\hfill $\Box$\\




\begin{eg}\label{eg:p=infty}
Let $M = \BC\oplus_\infty \BC$. 
The set $M_+^1$ equals the square in $\BR\oplus_\infty \BR$ with vertices $(0,0)$, $(0,1)$, $(1,1)$ and $(1,0)$.
If $\Phi_0: \BR\oplus_\infty \BR \to \BR\oplus_\infty \BR$ is the clockwise rotation by 90 degree about the center $(\frac{1}{2},\frac{1}{2})$, then the restriction $\Phi$ of $\Phi_0$ on $M_+^1$ is a bijective isometry onto $M_+^1$ that sends $(0,0)$ to $(0,1)$. 
Hence, $\Phi$ itself cannot be extended to a linear map.  
However, if $\Psi$ is as defined in Proposition \ref{prop:p=infty}, then 
$\Psi(1,1) = \Phi\left(\frac{3}{2},\frac{3}{2}\right) - \left(\frac{1}{2},\frac{1}{2}\right) = (1,-1)$ and the map 
$$(x,y) \mapsto  \Psi(1,1)\Psi(x,y) = (1, -1)\big(\Phi_0(x+1/2,y+1/2) - (1/2,1/2)\big) 
= (y,x)
$$
is a $^*$-automorphism of $M$. 
\end{eg}

\subsection{The case of $p=1$}


\begin{prop}\label{prop:p=1}
	If there exists a bijective isometry $\Phi$ from $M_{*,+}^1$ onto $N_{*,+}^1$, then $M$ and $N$ are Jordan $^*$-isomorphic.
\end{prop}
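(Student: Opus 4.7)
The plan is to parallel Proposition~\ref{prop:p=infty}: locate the origin $0$ metrically, promote $\Phi$ to an isometric order-isomorphism, recover an affine homeomorphism on the normal state space $S_*(M) := \{\varphi \in M_{*,+} : \|\varphi\|_1 = 1\}$, and conclude via Kadison's normal state space theorem \cite[Theorem 4.5]{Kad65}. Because $M_{*,+}^1$ is a cone cap rather than a translate of a ball, and because $L^1(M)$ is not strictly convex, Lemma~\ref{lem:Mank} does not apply directly and the intermediate steps are more delicate than in the $p = \infty$ case.

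First I would show $\Phi(0) = 0$. The natural candidate is to characterise $0$ as the unique $p \in M_{*,+}^1$ with $\sup_{\psi \in M_{*,+}^1} \|p - \psi\|_1 \leq 1$: for $p = 0$ this reduces to $\|\psi\|_1 \leq 1$, and for any $p \neq 0$ with $\supp p \neq 1$, picking an orthogonal normal state $\psi$ gives $\|p - \psi\|_1 = \|p\|_1 + 1 > 1$. A supplementary local-geometric argument is needed to rule out faithful candidates (e.g.\ the tracial state $\tfrac{1}{2}I \in M_2(\BC)_*$, which also has radius $1$), for instance by contrasting the metric sphere of radius $1$ about $0$---which equals all of $S_*(M)$---with the strictly smaller sphere about any other such candidate.

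Once $\Phi(0) = 0$, $\Phi$ preserves the norm and hence bijects $S_*(M)$ onto $S_*(N)$. The partial order on $L^1(M)_+$ is then recovered metrically via the equivalence
\[
\mu \leq \varphi \quad \iff \quad \|\mu\|_1 + \|\varphi - \mu\|_1 = \|\varphi\|_1 \qquad (\mu, \varphi \in L^1(M)_+).
\]
The forward direction is additivity of $\|\cdot\|_1$ on positive sums; for the reverse, writing the Jordan decomposition $\varphi - \mu = h_+ - h_-$ with $h_+ \perp h_-$, the identity $\varphi + h_- = \mu + h_+$ (both positive) gives $\|\varphi\|_1 + \|h_-\|_1 = \|\mu\|_1 + \|h_+\|_1$, which combined with $\|\varphi - \mu\|_1 = \|h_+\|_1 + \|h_-\|_1$ and the hypothesis forces $h_- = 0$. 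Thus $\Phi$ is an isometric order-isomorphism, restricting to each order interval $[0, \varphi] \to [0, \Phi(\varphi)]$.

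Finally I would promote $\Phi|_{S_*(M)}$ to an affine homeomorphism onto $S_*(N)$. Scalar multiples $t\varphi$ (and hence general convex combinations) are to be identified via combined metric-and-order conditions: for instance, $\varphi/2$ is the unique fixed point of the canonical involution $\iota_\varphi(\mu) := \varphi - \mu$ on $[0, \varphi]$, and one wants to argue that $\Phi$ intertwines these involutions, yielding $\Phi(t\varphi) = t\Phi(\varphi)$ on dyadic rationals and hence on $[0,1]$ by continuity; affineness on $S_*(M)$ follows, and Kadison's theorem delivers a Jordan $^*$-isomorphism $M \cong N$. The principal obstacle is precisely this affineness step: since $L^1$ is not strictly convex, an order interval $[0, \varphi]$ may admit several isometric order-reversing involutions swapping its endpoints, so pinning down the canonical $\iota_\varphi$ from the metric-order data alone requires extra input drawn from the global behaviour of $\Phi$ on $M_{*,+}^1$, possibly with a separate treatment depending on the type decomposition of $M$.
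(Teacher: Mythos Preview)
Your outline has a genuine gap at the affineness step, and the paper's proof sidesteps that step entirely.

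On $\Phi(0)=0$: your radius-one criterion does single out $0$ among elements with non-full support, but the patch for faithful candidates is not correct as stated. For $\tau=\tfrac12 I\in M_2(\BC)_*$ the metric sphere of radius $1$ about $\tau$ inside $M_{*,+}^1$ is $\{0\}\cup\{\text{pure states}\}$, which neither contains nor is contained in $S_*(M)$; it is not ``strictly smaller''. The paper instead proves directly (Lemma~\ref{lem:3-orth-proj}) that $\Phi(0)=0$ whenever $N$ has three mutually orthogonal non-zero projections, and disposes of the residual possibilities $\BC$, $\BC\oplus_\infty\BC$, $M_2(\BC)$ by comparing Hausdorff dimensions of the quasi-state spaces.

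The real obstacle is what comes next. You want $\Phi|_{S_*(M)}$ to be affine so as to quote \cite[Theorem~4.5]{Kad65}, and you already concede that the midpoint argument is blocked: order intervals $[0,\varphi]$ in $L^1$ typically carry many isometric order-reversing involutions swapping the endpoints (already for $\varphi$ Lebesgue measure on $[0,1]$, compose $\mu\mapsto\varphi-\mu$ with any measure-preserving involution of $[0,1]$), so nothing in the metric--order data picks out the canonical one. No concrete ``extra global input'' is supplied, so the argument does not close. The paper avoids the difficulty altogether: once $\Phi(0)=0$, Relation~\eqref{eqt:orth-supp} says that for normal states $\mu,\nu$ one has $\supp\mu\cdot\supp\nu=0$ if and only if $\|\mu-\nu\|=2$, so the restriction of $\Phi$ to $S_*(M)$ is merely \emph{biorthogonality preserving}. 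That already yields an orthoisomorphism of the projection lattices (Lemma~\ref{lem:orth-isom}), and a variant of Dye's theorem without the no-$I_2$ hypothesis (Lemma~\ref{lem:Dye}) then produces the Jordan $^*$-isomorphism. Affineness of $\Phi$ on the state space is never needed.
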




Note, first of all, that one cannot use Lemma \ref{lem:Mank} for this case, since the interior of $M_{*,+}^1$ could be an empty set, e.g. when $M= L^\infty([0,1])$. 

For any $\mu \in M_{*,+}$, we denote by $\supp \mu$ the support projection of $\mu$ in $M$. 
Recall that for any  $\mu,\nu\in M_{*,+}$, we have 
\begin{equation}\label{eqt:orth-supp}
\|\mu - \nu\| = \|\mu\| + \|\nu\|\quad\text{if and only if}\quad \supp \mu \cdot \supp \nu = 0. 
\end{equation}
In order to obtain Proposition \ref{prop:p=1}, we need the following lemma.

\begin{lem}\label{lem:3-orth-proj}
If $N$ contains three non-zero orthogonal projections $q_1$, $q_2$ and $q_3$, then the bijective isometry $\Phi$ in Proposition \ref{prop:p=1} will send $0$ to $0$. 
\end{lem}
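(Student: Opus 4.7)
The plan is to identify $0$ as the unique element of $N_{*,+}^1$ whose distance to every other point of $N_{*,+}^1$ is at most $1$. One inclusion is immediate: since $\|0 - x\| = \|x\| \leq 1$ for all $x \in M_{*,+}^1$, bijectivity and isometry of $\Phi$ give that $z := \Phi(0)$ satisfies $\|z - y\| \leq 1$ for every $y \in N_{*,+}^1$. The whole content of the lemma is the converse: the only element of $N_{*,+}^1$ with this property is $0$, and this is exactly where the hypothesis on three orthogonal projections enters.

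For that converse, I would take $z \in N_{*,+}^1$ with $z \neq 0$ and exhibit $y \in N_{*,+}^1$ with $\|z - y\| > 1$. Relabel $q_1, q_2, q_3$ so that $z(q_3) = \min_i z(q_i)$. Since $q_1 + q_2 + q_3 \leq 1$ and $z \in N_{*,+}$, we have $z(q_1) + z(q_2) + z(q_3) \leq \|z\|$, whence $z(q_3) \leq \|z\|/3$. Take $y$ to be any normal state of $N$ with $\supp y \leq q_3$ (which exists because $q_3 N q_3$ is a non-trivial unital von Neumann algebra), and consider the self-adjoint test element $a := 1 - 2q_3 \in N$; since $q_3$ is a non-trivial projection, $\|a\| = 1$. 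Because $\supp y \leq q_3$ forces $y(q_3) = \|y\| = 1$, a direct computation gives
\[
(z - y)(a) \ = \ \|z\| - 2z(q_3) - 1 + 2 \ = \ \|z\| + 1 - 2z(q_3) \ \geq \ 1 + \tfrac{\|z\|}{3} \ > \ 1,
\]
so $\|z - y\| \geq (z - y)(a) > 1$, as required. Combined with the opening paragraph, this forces $\Phi(0) = 0$.

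The conceptual obstacle is spotting the right metric characterization of $0$; once that is in hand, the estimate via the single functional $1 - 2q_3$ is essentially automatic. The three-projection hypothesis is sharp for this characterization: with only two orthogonal projections available, the analogous estimate gives $z(q_2) \leq \|z\|/2$ and hence only $(z - y)(a) \geq 1$, with no room to force a strict inequality. This is illustrated by $N = M_2(\BC)$ with $z$ the normalized trace, where every $y \in N_{*,+}^1$ satisfies $\|z - y\| \leq 1$, so the proposed metric characterization of $0$ genuinely fails without the third projection.
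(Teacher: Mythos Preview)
Your proof is correct and follows essentially the same approach as the paper: both pigeonhole to find a projection $q_i$ with $z(q_i)\le \|z\|/3$ and then test against the self-adjoint unitary $1-2q_i$ to force $\|z-y\|>1$. Your framing as a metric characterization of $0$ is in fact slightly cleaner than the paper's argument, since it bypasses the paper's preliminary step of first showing $\supp \Phi(0)=1$, which your computation reveals to be unnecessary.
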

\begin{proof}
Suppose on the contrary that $\Phi(0) \neq 0$. 
Let us first show that $\supp \Phi(0) = 1$. 
Indeed, if it is not the case, one can find $\mu\in M_{*,+}^1$
such that $\|\Phi(\mu)\| = 1$ and $\supp \Phi(\mu) \leq 1-\supp \Phi(0)$, which, together with \eqref{eqt:orth-supp}, gives the contradiction that 
$$
1\geq \|\mu - 0 \| = \|\Phi(\mu) -\Phi(0)\| = \|\Phi(\mu)\| + \|\Phi(0)\| > 1.
$$
As a result, $\Phi(0)(q_k) > 0$ for $k=1,2,3$. 
We may also assume, without loss of generality, that 
$\Phi(0)(q_1) \leq \|\Phi(0)\|/3$ because 
$$\sum_{k=1}^3\Phi(0)(q_k) \leq \|\Phi(0)\|.$$  
Now, pick any $\nu\in M_{*,+}^1$ with $\|\Phi(\nu)\| = 1$ and $\supp \Phi(\nu) \leq q_1$.
Since $1-2q_1$ is a unitary and $\|\Phi(\nu) - \Phi(0)\| = \|\nu\| \leq 1$, one arrives at the following contradiciton:
$$
1 \geq |(\Phi(\nu) - \Phi(0))(q_1 - (1-q_1))| = |1 - \Phi(0)(q_1) + \Phi(0)(1-q_1)| = 1 +\|\Phi(0)\| - 2\Phi(0)(q_1) > 1.
$$
\end{proof}

Consequently, if $N$ contains three non-zero orthogonal projections, then $\Phi$ induces an isometric bijection from the normal state space of $M$ to that of $N$, and hence, we may conclude that $M$ and $N$ are Jordan $^*$-isomorphic by using \cite[Theorem 3.4]{LNW-tran-prob}. 
For the benefit of the readers, we will instead go through briefly the argument of \cite[Theorem 3.4]{LNW-tran-prob} by recalling the following two lemmas.
These two lemmas are also needed in the case of $p\in (0,+\infty)$ below.

Let us recall that a bijection $\Gamma$ from the lattice of projections in $M$ to that of $N$ is called an \emph{orthoisomorphism} if for any projections $p$ and $q$ in $M$, one has 
$$p\,q=0  \quad \text{if and only if}\quad \Gamma(p) \Gamma(q) = 0. $$


\begin{lem}\label{lem:orth-isom}
(\cite[Lemma 3.1(a)]{LNW-tran-prob})
Suppose that $\Psi$ is a bijection from the normal state space of $M$ to that of $N$, which is 
\emph{biorthogonality preserving} in the sense that  for any normal states $\mu$ and $\nu$ of $M$, one has 
$$
\supp \mu \cdot \supp \nu = 0\quad\text{if and only if}\quad\supp \Psi(\mu) \cdot \supp \Psi(\nu) = 0. 
$$
Then there is an orthoisomorphism $\check \Psi$ from the lattice of projections in $M$ to that of $N$
satisfying $\check \Psi(\supp \mu) = \supp \Psi(\mu)$ for any normal state $\mu$ on 
$M$.
\end{lem}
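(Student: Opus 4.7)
The plan is to define $\check\Psi$ on the projection lattice of $M$ by
$$\check\Psi(p) := \bigvee\big\{\supp\Psi(\mu): \mu \text{ is a normal state of } M,\ \supp\mu \leq p\big\},\quad \check\Psi(0):= 0,$$
and to verify that this yields an orthoisomorphism with the claimed identity. The key background fact I would invoke is that every projection $p$ in a von Neumann algebra equals $\bigvee\{\supp\omega: \omega \text{ normal state of that algebra},\ \supp\omega \leq p\}$, which is immediate from vector states in any faithful representation. This is what makes the supremum definition substantive when $M$ is not $\sigma$-finite (so $p$ might not itself be the support of a single normal state).

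The technical heart is the monotonicity assertion: if normal states $\mu_1,\mu_2$ of $M$ satisfy $\supp\mu_1\leq \supp\mu_2$, then $\supp\Psi(\mu_1)\leq \supp\Psi(\mu_2)$. Indeed, any normal state $\nu$ of $N$ with $\supp\nu\leq 1-\supp\Psi(\mu_2)$ is orthogonal to $\Psi(\mu_2)$; biorthogonality gives $\Psi^{-1}(\nu)\perp\mu_2$, hence $\supp\Psi^{-1}(\nu)\leq 1-\supp\mu_2\leq 1-\supp\mu_1$, hence $\Psi^{-1}(\nu)\perp\mu_1$, and biorthogonality again gives $\nu\perp\Psi(\mu_1)$. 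Joining over all such $\nu$ (using the background fact) yields $(1-\supp\Psi(\mu_2))\cdot\supp\Psi(\mu_1) = 0$. As an immediate corollary, $\check\Psi(\supp\mu) = \supp\Psi(\mu)$ for every normal state $\mu$, since the supremum in the definition is then bounded above by $\supp\Psi(\mu)$ and achieved at $\mu$.

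Bijectivity and orthogonality preservation then follow by parallel sup-over-supports arguments. Letting $\check\Phi$ be the analogous map built from $\Psi^{-1}$, the inclusion $p\leq \check\Phi(\check\Psi(p))$ is automatic from the definitions, while the reverse inclusion is obtained by fixing $\nu$ with $\supp\nu\leq\check\Psi(p)$ and $\mu'$ with $\supp\mu'\leq 1-p$, and tracking the chain $\supp\mu'\cdot\supp\mu = 0$ for all admissible $\mu$ under $p$, forcing $\supp\Psi(\mu')\cdot \check\Psi(p) = 0$, hence $\supp\Psi(\mu')\cdot\supp\nu = 0$, hence (biorthogonality) $\supp\mu'\cdot\supp\Psi^{-1}(\nu) = 0$, and finally $\supp\Psi^{-1}(\nu)\leq p$ upon joining over $\mu'$. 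Orthogonality preservation is similar: $p_1 p_2 = 0$ forces $\supp\mu_1\cdot\supp\mu_2 = 0$ and hence $\supp\Psi(\mu_1)\cdot\supp\Psi(\mu_2) = 0$ for all admissible $\mu_1,\mu_2$, which upon taking suprema gives $\check\Psi(p_1)\check\Psi(p_2) = 0$; the converse uses $\check\Phi = \check\Psi^{-1}$.

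The only real subtlety I expect is the non-$\sigma$-finite case, which forces the definition by supremum and repeated use of the ``every projection is a join of supports of normal states below it'' principle. Once that principle is in hand, all the verifications reduce to shuttling orthogonality relations through $\Psi$ and $\Psi^{-1}$ using the biorthogonality hypothesis.
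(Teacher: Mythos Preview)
The paper does not actually prove this lemma; it merely quotes it as \cite[Lemma~3.1(a)]{LNW-tran-prob} and moves on. So there is no ``paper's own proof'' to compare against.

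Your argument is correct and is essentially the standard one. The definition of $\check\Psi$ via suprema of supports, the monotonicity step (shuttling orthogonality through $\Psi^{-1}$ and back), the identity $\check\Psi(\supp\mu)=\supp\Psi(\mu)$, the mutual-inverse verification with $\check\Phi$, and the two-step sup argument for orthogonality preservation all go through as you describe. The only point I would make explicit in a final write-up is the passage from ``$\supp\nu\cdot r=0$ for every normal state $\nu$ with $\supp\nu\le q$'' to ``$q\cdot r=0$'': this uses that each such $\supp\nu\le 1-r$, hence the join $q=\bigvee\supp\nu\le 1-r$; you invoke this implicitly several times and it is exactly your ``background fact'', but it is worth stating once so the reader sees why the join does not escape $1-r$.
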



A second lemma that we need is the following possibly well-known variant of a theorem of H. A. Dye in \cite{Dye} (see e.g. \cite[Lemma 2.2(a)]{LNW-tran-prob}).
Note that an assumption of not having type $I_2$ summand is needed for the original version of Dye's theorem.
However, the variant here has a weaker conclusion and does not need the assumption concerning the absence of type $I_2$ summand. 

\begin{lem}\label{lem:Dye}
If there exists an orthoisomorphism from the lattice of projections in $M$ to that of $N$, then $M$ and $N$ are Jordan $^*$-isomorphic.
\end{lem}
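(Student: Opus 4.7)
The plan is to split $M$ and $N$ into their type $I_2$ direct summand and its complement, apply the classical Dye theorem to the latter, and handle the type $I_2$ piece via the standard structure theorem for type $I_n$ von Neumann algebras.

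Let $\Gamma\colon \Proj(M)\to \Proj(N)$ denote the given orthoisomorphism. I would first observe that $\Gamma$ is automatically a lattice isomorphism which preserves the orthocomplementation. Indeed, the partial order on projections admits the purely orthogonality-theoretic characterisation
$$p\leq q \ \Longleftrightarrow\ \forall r\in \Proj(M)\ (r\perp q \Rightarrow r \perp p),$$
and since both $\Gamma$ and $\Gamma^{-1}$ preserve orthogonality, the order, meet, join and orthocomplementation are all preserved. In particular, commutativity of projections (expressible as $p = (p\wedge q)\vee(p\wedge q^\perp)$), centrality, abelianness of a projection, Murray--von Neumann equivalence, and hence the full type decomposition, are respected by $\Gamma$.

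Let $e\in \Cent(M)$ be the maximal central projection such that $eMe$ is of type $I_2$, and set $f:=\Gamma(e)\in\Cent(N)$; by the previous paragraph, $fNf$ is the type $I_2$ summand of $N$. Write
$$M = eMe \oplus (1-e)M(1-e), \qquad N = fNf \oplus (1-f)N(1-f),$$
so that $\Gamma$ restricts to orthoisomorphisms on each corner. On the non-type-$I_2$ corner, the classical Dye theorem produces a Jordan $^*$-isomorphism $(1-e)M(1-e)\cong (1-f)N(1-f)$. On the type $I_2$ corner, $\Gamma$ further restricts to a complete Boolean isomorphism $\Proj(\Cent(eMe))\to \Proj(\Cent(fNf))$, which, by the standard classification of abelian von Neumann algebras via their Boolean algebras of projections, upgrades to a $^*$-isomorphism $\Cent(eMe)\cong \Cent(fNf)$. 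Combining this with the structure theorem $A \cong \Cent(A)\bar\otimes M_2(\BC)$ that holds for every type $I_2$ von Neumann algebra $A$, we get a $^*$-isomorphism $eMe\cong fNf$. The direct sum of the two corner isomorphisms is the desired Jordan $^*$-isomorphism $M\cong N$.

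The main obstacle I anticipate is justifying that all the relevant structural notions — especially the type-$I_2$ direct summand and the identification of the centre — are intrinsic to the orthocomplemented projection lattice, so that they can be transported through $\Gamma$. These facts are classical and are implicit in the literature surrounding Dye's theorem, but they need to be assembled carefully. Once this is done, the rest of the argument is a clean reduction to Dye's original theorem plus the structure theorem for type $I_2$ algebras.
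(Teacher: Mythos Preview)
The paper does not actually prove this lemma; it records it as a known variant of Dye's theorem and cites \cite[Lemma 2.2(a)]{LNW-tran-prob}, remarking only that the weaker conclusion here (mere \emph{existence} of a Jordan $^*$-isomorphism, rather than extension of $\Gamma$ itself) is what allows one to dispense with the no-type-$I_2$ hypothesis of Dye's original theorem.

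Your plan is exactly the argument that remark is pointing to, and it is correct: split off the type $I_2$ summand, apply classical Dye on the complement, and on the type $I_2$ corner pass to the centre and use $A\cong \Cent(A)\,\bar\otimes\, M_2(\BC)$. The point you flag as the ``main obstacle'' --- that centrality, abelianness, central covers, Murray--von Neumann equivalence, and hence the type $I_2$ summand, are intrinsic to the orthocomplemented projection lattice --- is indeed the only substantive step, and it is classical: commutativity of projections is lattice-theoretic via $p=(p\wedge q)\vee(p\wedge q^\perp)$, and Murray--von Neumann equivalence in a von Neumann algebra coincides with (transitive) perspectivity in its projection lattice. With these facts in hand your reduction goes through, so the proposal is sound and matches the route the paper implicitly invokes.
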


\noindent \emph{Proof of Proposition \ref{prop:p=1}.}\ 
Let us first consider the case when $N$ contains three non-zero orthogonal projections. 
Then by Lemma \ref{lem:3-orth-proj}, the map $\Phi$ restricts to an isometric bijection $\Psi$ from the normal state space of $M$ to that of $N$.  
Moreover, \eqref{eqt:orth-supp} implies that $\Psi$ is biorthogonality preserving. 
Now, the conclusion follows from Lemmas \ref{lem:orth-isom} and \ref{lem:Dye}. 

In the case when $M$ contains three non-zero orthogonal projections, one obtains the same conclusion by considering the bijective isometry $\Phi^{-1}$. 

Suppose that neither $M$ nor $N$ contains three non-zero orthogonal projections. 
Then $M$ and $N$ can only be $\BC$, $\BC\oplus_\infty \BC$ or $M_2(\BC)$. 
Observe that the Hausdorff dimensions of the quasi-state space of $\BC$, $\BC\oplus_\infty \BC$ and $M_2(\BC)$ are $1$, $2$ and $4$ respectively. 
Since a bijective isometry preserves Hausdorff dimensions, we conclude that $M$ and $N$ are $^*$-isomorphic. 
\hfill $\Box$\\


\subsection{A preparation for the case of $p \in (1, +\infty)$}

\begin{prop}\label{prop:ext-Lp}
	Let $p\in (1,+\infty)$.
	Suppose that $M$ and $N$ are two von Neumann algebras such that either $M\neq \BC$ or $N\neq \BC$.
	Then any bijective isometry $\Phi: L^p(M)_+^1\to L^p(N)_+^1$ extends to a linear isometry from $L^p(M)_\sa$ onto $L^p(N)_\sa$.
\end{prop}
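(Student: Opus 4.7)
My plan is a Mazur--Ulam-type argument adapted to the convex body $L^p(M)_+^1$, exploiting the strict convexity of non-commutative $L^p$-spaces for $p\in (1,+\infty)$. First, since $L^p(N)_\sa$ is strictly convex, the midpoint $(u+v)/2$ of any two points is the \emph{unique} element $w$ satisfying $\|w-u\|=\|w-v\|=\|u-v\|/2$ (via the usual equality-in-triangle-inequality argument: $(w-u) + (v-w) = v-u$ with both summands of norm $\|u-v\|/2$ forces them to coincide). Because $L^p(M)_+^1$ is convex, midpoints of pairs stay inside, and the bijective isometry $\Phi$ must therefore satisfy $\Phi((x+y)/2)=(\Phi(x)+\Phi(y))/2$ for $x,y\in L^p(M)_+^1$. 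Iterating to dyadic convex combinations and invoking continuity of $\Phi$, we obtain $\Phi(tx+(1-t)y) = t\Phi(x) + (1-t)\Phi(y)$ for all $t \in [0,1]$, so $\Phi$ is affine on $L^p(M)_+^1$. Write $\Phi(x) = L_0(x) + c$ with $c := \Phi(0)$ and $L_0$ the corresponding affine part, $L_0(0)=0$.

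Next I would extend $L_0$ to a linear isometry of the ambient self-adjoint parts. Affineness with $L_0(0)=0$ gives $L_0(\lambda x) = \lambda L_0(x)$ on $L^p(M)_+^1$ for $\lambda \in [0,1]$; this extends $L_0$ to $L^p(M)_+$ by positive homogeneity, and thence to $L^p(M)_\sa$ by $L_0(h) := L_0(h_+) - L_0(h_-)$ using the Jordan decomposition $h = h_+ - h_-$. Additivity on $L^p(M)_+$ follows by writing $x,y \in L^p(M)_+$ as scaled convex combinations $(x+y)/t = (\|x\|/t)(x/\|x\|) + (\|y\|/t)(y/\|y\|)$ with $t = \|x\|+\|y\|$ and applying affineness; well-definedness of the extension to $L^p(M)_\sa$ then follows from additivity on $L^p(M)_+$. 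The isometric property on $L^p(M)_\sa$ uses the distance-preservation of $L_0$ on $L^p(M)_+^1$ combined with the orthogonality identity $\|h_+ - h_-\|_p^p = \|h_+\|_p^p + \|h_-\|_p^p$, which holds because $h_+, h_-$ have orthogonal supports, so $|h|^p = h_+^p + h_-^p$.

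Finally one needs $c = 0$, which is where the hypothesis $M\neq\BC$ or $N\neq\BC$ must enter. If $\supp c \neq 1$, pick a norm-one $y \in L^p(N)_+$ with support orthogonal to $c$; then orthogonality gives $\|y-c\|_p=(1+\|c\|_p^p)^{1/p}>1$, contradicting $\|y-c\|_p=\|\Phi^{-1}(y)\|_p\leq 1$. If $\supp c = 1$ and $c\neq 0$, the non-triviality of $M$ or $N$ supplies a non-trivial spectral projection $e$ of $c$ (or of the corresponding element obtained by applying the above argument to $\Phi^{-1}$), and a judicious choice of norm-one $y\in L^p(N)_+$ supported in $1-e$, where $c$'s $L^p$-mass is small, again produces $\|y - c\|_p > 1$ via the same orthogonality identity. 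The main obstacle I expect is this last case $c\neq 0$ with $\supp c=1$: it requires a delicate spectral choice to make the $L^p$-mass of $c$ in a chosen orthogonal corner small enough, and the hypothesis cannot be dropped, as the isometry $x\mapsto 1-x$ on $[0,1]=L^p(\BC)_+^1$ witnesses failure when $M=N=\BC$.
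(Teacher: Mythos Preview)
Your strategy coincides with the paper's at the structural level: strict convexity forces $\Phi$ to be affine (this is the paper's Lemma~\ref{lem:strict-conv-isom>affine}), and one then extends by positive homogeneity to the cone and by the Jordan decomposition to $L^p(M)_\sa$ (the paper packages this as Lemma~\ref{lem:isom-cone>Jord}). A small remark on your second paragraph: the orthogonality identity is unnecessary there --- once $L_0$ is linear, $\|L_0(h)\|=\|L_0(h_+)-L_0(h_-)\|=\|h_+-h_-\|=\|h\|$ follows directly from distance preservation on the cone; invoking $\|h_+-h_-\|_p^p=\|h_+\|_p^p+\|h_-\|_p^p$ would only help if you already knew $L_0(h_+)$ and $L_0(h_-)$ have orthogonal supports, which you do not until $c=0$ is established.

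The genuine gap is your argument that $c:=\Phi(0)=0$. The case $\supp c\neq 1$ is fine, but the case $\supp c=1$ with $c\neq 0$ does not go through as written. Your ``judicious spectral projection'' breaks down precisely when $c$ has no non-trivial spectral projections --- for instance when $c$ is a scalar multiple of a projection. Concretely, take $N=M_2(\BC)$, $p=2$, and $c=2^{-1/2}\cdot 1$ (so $\supp c=1$ and $\|c\|_2=1$); a direct computation gives $\|y-c\|_2^2 = \|y\|_2^2 - \sqrt{2}\,\mathrm{tr}(y) + 1 \leq 1$ for \emph{every} $y\in L^2(N)_+^1$, so no choice of $y$ yields the desired contradiction $\|y-c\|>1$. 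Passing to $\Phi^{-1}$ does not help in general, since $\Phi^{-1}(0)$ may exhibit the same defect.

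The paper's argument for $c=0$ is both shorter and avoids spectra entirely. Since $\Phi$ is affine, $c$ is an extreme point of $L^p(N)_+^1$, hence (by strict convexity) either $c=0$ or $\|c\|_p=1$. If $\|c\|_p=1$, the hypothesis $M\neq\BC$ or $N\neq\BC$ forces $\dim L^p(N)_\sa\geq 2$ (indeed, if one of $M,N$ equals $\BC$ then counting extreme points of the two unit balls shows the other does too), so there is a sequence $(v_i)$ in the norm-one part of $L^p(N)_+$ with $v_i\neq c$ and $v_i\to c$. Then $\Phi^{-1}(v_i)$ is a sequence of norm-one elements of $L^p(M)_+$ converging to $\Phi^{-1}(c)=0$, a contradiction. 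This is exactly where the dimension hypothesis enters, not in locating spectral projections of $c$.
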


Notice that $L^p(M)_\sa$ and $L^p(N)_\sa$ are strictly convex Banach spaces for $p\in (1,+\infty)$ (see e.g., Section 5 of \cite{PX}). 
We recall the following well-known fact concerning strictly convex Banach spaces.


\begin{lem}\label{lem:strict-conv-isom>affine}
	Let $X_1$ and $X_2$ be  Banach spaces such that $X_2$ is strictly convex.  Then every isometry from 
 a convex subset $K$ of $X_1$ into $X_2$ is automatically an affine map.
\end{lem}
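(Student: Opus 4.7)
The plan is to reduce affinity to preservation of midpoints, and then invoke continuity of the isometry $\phi\colon K\to X_2$. First I would record the metric characterisation of midpoints in a strictly convex space $X_2$: for any $u,v\in X_2$, the point $(u+v)/2$ is the \emph{unique} $w\in X_2$ satisfying $\|w-u\|=\|w-v\|=\|u-v\|/2$. Uniqueness is where strict convexity enters; such a $w$ forces equality in the triangle inequality $\|u-v\|\leq \|u-w\|+\|w-v\|$, so by strict convexity $u-w$ and $w-v$ are nonnegative scalar multiples of one another, and equal norms then give $u-w=w-v$.

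Given any $x,y\in K$, convexity of $K$ places the midpoint $(x+y)/2$ inside $K$, so $\phi((x+y)/2)$ is defined. Because $\phi$ is an isometry, this image lies at distance $\|x-y\|/2$ from each of $\phi(x)$ and $\phi(y)$, while $\|\phi(x)-\phi(y)\|=\|x-y\|$. The uniqueness above therefore forces
$$
\phi\bigl((x+y)/2\bigr) \;=\; \bigl(\phi(x)+\phi(y)\bigr)/2,
$$
so $\phi$ preserves midpoints.

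A routine induction then yields $\phi(\lambda x+(1-\lambda)y)=\lambda\phi(x)+(1-\lambda)\phi(y)$ for every dyadic rational $\lambda=k/2^n\in[0,1]$, where at each stage of the induction convexity of $K$ again keeps the intermediate points in the domain of $\phi$. Since $\phi$ is an isometry, it is in particular continuous, and the map $t\mapsto tx+(1-t)y$ is continuous into $K$; by density of dyadic rationals in $[0,1]$, the identity passes to the limit and holds for every $\lambda\in[0,1]$, which is exactly what it means for $\phi$ to be affine. There is no serious obstacle: the only subtlety — and the sole place where convexity of $K$ is used — is to guarantee that every intermediate convex combination lies in the domain of $\phi$, while strict convexity of $X_2$ does the remaining work of pinning down the image uniquely.
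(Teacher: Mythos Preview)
Your proof is correct and follows essentially the same strategy as the paper: both arguments reduce affinity to midpoint preservation and then use strict convexity of $X_2$ to force $\phi((x+y)/2)$ to equal $(\phi(x)+\phi(y))/2$ via the equality case of the triangle inequality. The paper normalises first by translating so that $y=0$ and $f(y)=0$, whereas you state the metric characterisation of midpoints in $X_2$ directly; this is a cosmetic difference only.
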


In fact, we only need to verify that $f\big((x+ y)/2\big) = \big(f(x) + f(y)\big)/2$,
for any $x\neq y$ in $K$.
By ``shifting'' $K$ and $f$, one may assume that $y =0$ and $f(0) = 0$.
Under this assumption, we have $\|f(z)\| = \|z\|$ ($z\in K$) and 
\begin{equation}\label{eqt:affine}
\|f(x) - f(x/2)\| = \|x -x/2\| = \|f(x)\|/2 = \|f(x)\| - \|x\|/2 = \|f(x)\| - \|f(x/2)\|.
\end{equation}
The strict convexity of $X_2$ gives $f(x) - f(x/2)\in \BR\cdot f(x/2)$.
This, together with the last two equalities in \eqref{eqt:affine}, will produce $f(x) = 2f(x/2)$.


The following lemma is an analogue of \cite[Proposition 3.7]{LNW-tran-prob}. 
Note that we consider in this lemma bijective isometries between the contractive parts instead of those between the norm-one parts of $K_1$ and $K_2$ as in \cite{LNW-tran-prob}.
Moreover, we have a more general setting here. 

\begin{lem}\label{lem:isom-cone>Jord}
	Let $X_1$ and $X_2$ be strictly convex real Banach spaces of dimensions at least two (could be infinite).
	Let $\CF: \RP^{(4)}\to \RP$ be a function satisfying 
	$$\CF(t, t, 0, t) = 0 \qquad (t\in \BR_+).$$ 
	For $k=1,2$, suppose that $K_k\subseteq X_k$ is a closed and proper cone in $X_k$ which is \emph{$\CF$-generating}, in the sense that for any $x\in X_k$, there exist unique elements $x_+, x_-\in K_k$ with 
	$$x = x_+ - x_-  \quad \text{and} \quad \CF(\|x\|, \|x_+\|, \|x_-\|, \|x_+ + x_-\|) =0.$$
	Then there are canonical bijective correspondences amongst the following (given by restrictions):
	\begin{itemize}
		\item the set $\CI$ of real linear isometries from $X_1$ onto $X_2$ that send $K_1$ onto $K_2$.
		\item the set $\CI_B$ of bijective isometries from $K_1^1$ onto $K_2^1$.
		\item the set $\CI_K$ of bijective isometries from $K_1$ onto $K_2$.
	\end{itemize}
\end{lem}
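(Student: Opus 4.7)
The plan is to prove surjectivity of the restriction maps $\CI \to \CI_K \to \CI_B$; injectivity is routine, since any $\BR$-linear map on $X_1$ is determined by its values on $K_1^1$ (because $K_1 = \RP \cdot K_1^1$ and $X_1 = K_1 - K_1$ by the $\CF$-generating hypothesis). I will work outward from $\Phi \in \CI_B$, using Lemma~\ref{lem:strict-conv-isom>affine} at the outset to upgrade $\Phi$ to an affine map on the convex set $K_1^1$.

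The crucial step is to show $\Phi(0) = 0$. An affine bijection between convex sets maps extreme points to extreme points, and in a strictly convex Banach space the extreme points of $K_k^1$ are exactly $\{0\} \cup S_k$, where $S_k := \{x \in K_k : \|x\| = 1\}$ (the claim that every norm-one element of the cone is extreme follows immediately from strict convexity). Assuming $c := \Phi(0) \in S_2$ for contradiction, set $c' := \Phi^{-1}(0) \in S_1$; then for every $e \in S_1 \setminus \{c'\}$ the isometry identity gives $\|e - c'\| = \|\Phi(e) - \Phi(c')\| = \|\Phi(e)\| = 1$. The hypothesis $\dim X_1 \geq 2$, combined with $X_1 = K_1 - K_1$, prevents $K_1$ from lying in a single ray, so $S_1$ contains more than one point; being the continuous image under normalisation of the connected set $K_1 \setminus \{0\}$, it is itself connected, so $c'$ is not isolated in $S_1$. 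Any sequence $e_n \to c'$ in $S_1 \setminus \{c'\}$ then contradicts $\|e_n - c'\| = 1$.

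Once $\Phi(0) = 0$, affinity yields $\Phi(tx) = t \Phi(x)$ on $K_1^1$ for $t \in [0,1]$, and so $\ti\Phi(x) := \lambda \Phi(x/\lambda)$ with any $\lambda \geq \|x\|$ unambiguously defines a positively homogeneous, additive, isometric bijection $\ti\Phi : K_1 \to K_2$, placing $\ti\Phi$ in $\CI_K$. I then extend to $X_1$ through the $\CF$-decomposition by setting $\hat\Phi(x) := \ti\Phi(x_+) - \ti\Phi(x_-)$: additivity reduces (after using additivity of $\ti\Phi$ on $K_1$) to the identity $(x+y)_+ + x_- + y_- = (x+y)_- + x_+ + y_+$; isometry follows from $\|\hat\Phi(x)\| = \|\ti\Phi(x_+) - \ti\Phi(x_-)\| = \|x_+ - x_-\| = \|x\|$; and $\BR$-linearity is obtained from additivity plus continuity (the latter coming from the isometry). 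Since $\hat\Phi(K_1) = \ti\Phi(K_1) = K_2$, the map $\hat\Phi$ lies in $\CI$ and restricts to $\Phi$ on $K_1^1$.

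The parallel surjectivity $\CI_K \to \CI$ replaces the extreme-points argument by a cleaner one: if $\ti\Phi \in \CI_K$ and $c := \ti\Phi(0)$, then $\ti\Phi - c$ is additive on $K_1$ (from affinity on $K_1$ via Lemma~\ref{lem:strict-conv-isom>affine}), so $(\ti\Phi - c)(K_1) = K_2 - c$ is additively closed; since $-c = 0 - c \in K_2 - c$, closure gives $-2c \in K_2 - c$, i.e., $-c \in K_2$, whence $c \in K_2 \cap (-K_2) = \{0\}$ by properness. The $\CF$-decomposition extension then applies verbatim. The principal obstacle throughout is the $\Phi(0) = 0$ step in the $\CI_B$ case --- fixing the origin from the bounded-metric picture alone --- which is precisely where strict convexity and the dimension hypothesis $\dim X_k \geq 2$ are essentially used.
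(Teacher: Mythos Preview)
Your proof is correct and follows the same overall architecture as the paper's (restriction maps, $\Phi(0)=0$ via the extreme-point description of $K_k^1$, positive-homogeneous extension to $K_1$, then extension to $X_1$ via the $\CF$-decomposition), but the final extension step is handled by a genuinely different and somewhat slicker mechanism. The paper, after defining $\ti\varphi(x):=\varphi(x_+)-\varphi(x_-)$, first verifies the $\CF$-identity for the pair $(\varphi(x_+),\varphi(x_-))$ to conclude $\ti\varphi(x)_\pm=\varphi(x_\pm)$, uses this together with the inverse to obtain surjectivity, proves the isometry inequality directly, and then invokes the Mazur--Ulam theorem to get $\BR$-linearity. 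You instead prove additivity of $\hat\Phi$ outright from the Riesz-type identity $(x+y)_+ + x_- + y_- = (x+y)_- + x_+ + y_+$ and additivity of $\ti\Phi$ on $K_1$; norm preservation then gives continuity, hence $\BR$-linearity, and surjectivity drops out of $\hat\Phi(X_1)=\hat\Phi(K_1-K_1)=K_2-K_2=X_2$. This avoids both the image-side $\CF$-check and Mazur--Ulam, and in fact uses the $\CF$-hypothesis only through existence/uniqueness of the decomposition together with $\CF(t,t,0,t)=0$. Your treatment of the non-isolation of $c'$ in $S_1$ via connectedness of $K_1\setminus\{0\}$ is also a more explicit justification of the step the paper phrases as ``since $\dim X_2>1$''. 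The paper's route has the minor advantage of recording the relation $\ti\varphi(x)_\pm=\varphi(x_\pm)$, which is of independent interest; yours is shorter and more self-contained.
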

\begin{proof}
If $\Phi\in \CI$, then obviously $\Phi|_{K_1^1}\in \CI_B$.
The assignment $\Phi\mapsto \Phi|_{K_1^1}$ is an injection from $\CI$ to $\CI_B$ because $K_1^1$ linearly spans $X_1$.

Suppose that $\Psi\in \CI_B$.
We put 
$$S_{i}:= \{u\in K_i: \|u\| = 1\} \qquad (i=1,2).$$
The set of extreme points of $K_i^1$ is $S_{i}\cup \{0\}$ (since $X_i$ is strictly convex).
By Lemma \ref{lem:strict-conv-isom>affine}, the map $\Psi$ is affine and hence $\Psi(0)\in S_{2}\cup \{0\}$.
If $\Psi(0)\in S_{2}$, then there is a sequence $\{v_i\}_{i\in \BN}$ in $S_{2}\setminus \{\Psi(0)\}$ with $\|v_i - \Psi(0)\|\to 0$ (as $\dim X_2 > 1$), and hence $\{\Psi^{-1}(v_i)\}_{i\in \BN}$ is a sequence in $S_{1}$ norm-converging to $0$, which is absurd.
Thus, we know that $\Psi(0) = 0$.
Define $\hat \Psi:K_1 \to K_2$ by
\begin{equation}\label{eqt:ext-Psi}
\hat \Psi(0) :=0 \qquad \text{and} \qquad \hat \Psi(u) := \|u\| \Psi\left(u/\|u\|\right) \quad (u\in K_1\setminus \{0\}).
\end{equation}
As $\Psi$ is an affine map sending $0$ to $0$, we see that $\hat \Psi$ extends $\Psi$ and that $\hat \Psi(tu) = t\hat \Psi(u)$ ($u\in K_1, t\in \RP$).
For any $u,v\in K_1$, if $\lambda := \|u\|+\|v\|+1$, then
$$\left\|\hat \Psi(u) - \hat \Psi(v)\right\| = \left\|\lambda \Psi\Big(\frac{u}{\lambda}\Big) - \lambda \Psi\Big(\frac{v}{\lambda}\Big)\right\| = \lambda \Big\|\frac{u}{\lambda} - \frac{v}{\lambda}\Big\| = \|u-v\|.$$
Consequently, $\hat \Psi \in \CI_K$.
The assignment $\Psi\mapsto \hat \Psi$ is clearly injective.

Suppose that $\varphi\in \CI_K$.
Again, Lemma \ref{lem:strict-conv-isom>affine} implies that $\varphi$ is affine, and will send extreme points of $K_1$ to extreme points of $K_2$.
However, as $K_i$ is a proper cone, the only extreme point in $K_i$ is zero ($i=1,2$) and we have $\varphi(0) = 0$.
This means that $\varphi$ is additive and positively homogeneous on $K_1$.
Hence, 
\begin{equation}\label{eqt:norm-sum}
\|\varphi(u) + \varphi(v)\| = \|\varphi(u + v)\| = \|u + v\|\qquad (u,v\in K_1).
\end{equation}
Let us define $\ti \varphi:X_1\to X_2$ by 
$$\ti \varphi(x):= \varphi(x_+) - \varphi(x_-) \qquad (x\in X_1).$$
Since $\CF(t, t, 0, t) = 0$ for all $t\in \BR_+$, one has $x_+ = x$ and $x_- =0$ whenever $x\in K_1$.
Hence, $\ti \varphi$ extends $\varphi$.
Moreover, Relation \eqref{eqt:norm-sum} as well as $\|\ti \varphi(x)\| = \|x\|$ ($x\in X_1$) implies
$$
\CF(\|\ti \varphi(x)\|, \|\varphi(x_+)\|, \|\varphi(x_-)\|, \|\varphi(x_+) + \varphi(x_-)\|) = 
\CF(\|x\|, \|x_+\|, \|x_-\|, \|x_+ + x_-\|) = 0,
$$
and the uniqueness of $\ti \varphi(x)_\pm$ ensures that $\ti \varphi(x)_\pm = \varphi(x_\pm)$ ($x\in X_1$).
Furthermore, for any $x,y\in X_1$, one has 
\begin{align*}
\|\ti \varphi(x) - \ti \varphi(y)\|
&= \|\varphi(x_+) - \varphi(x_-) - \varphi(y_+) + \varphi (y_-)\|
\ =\  \|\varphi(x_+ + y_-) - \varphi(x_- + y_+)\|\\
&= \|(x_+ + y_-) - (x_- + y_+)\|
\ =\ \|x-y\|. 
\end{align*}
Applying the same arguments to 
$$\psi:=\varphi^{-1},$$ 
we will obtain a map $\ti\psi$ from $X_2$ into $X_1$ satisfying
$\ti \psi(z)_\pm = \psi(z_\pm)$ ($z\in X_2$).  
For any $z$ in $X_2$, if we set $x:=\ti\psi(z) \in X_1$, then
$$
\ti\varphi(x) = \varphi(x_+) - \varphi(x_-) = \varphi(\psi(z_+)) - \varphi(\psi(z_-)) = z.
$$
This ensures the surjectivity of $\ti \varphi$.
Hence, $\ti \varphi$ is a bijective isometry sending $0$ to $0$, and the Mazur-Ulam theorem tells us that $\ti \varphi\in \CI$.
It is easy to see that the canonical extension of $\ti \varphi|_{K_1^1}$ to $K_1$ as in \eqref{eqt:ext-Psi} coincides with $\varphi$.	
This completes the proof.
\end{proof}


For any $T\in L^p(M)_\sa$, we denote by $\supp T$ the support projection of $T$, i.e. $\supp T$ is the smallest projection $p$ in $M$ satisfying $T\cdot p  = T$ (or equivalently, $p \cdot T = T$).
Let us recall the following statements concerning $S,T\in L^p(M)_+$ from Fact 1.2 and Fact 1.3 of \cite{RX03}:

\begin{enumerate}[S1).]
\item $\supp \Lambda_p(T) = \supp T$;

\item $S\cdot T = 0$ if and only if $\supp S \cdot \supp T = 0$;

\item if $\supp S \cdot \supp T = 0$, then $\|S+T\|_p^p = \|S-T\|_p^p = \|S\|_p^p + \|T\|_p^p$;

\item if $p\neq 2$ and $\|S+T\|_p^p = \|S-T\|_p^p = \|S\|_p^p + \|T\|_p^p$, then $\supp S \cdot \supp T = 0$.
\end{enumerate}


\noindent \emph{Proof of Proposition \ref{prop:ext-Lp}.}\ 
We note, first of all, that if $M=\BC$, then there are only two extreme points of $L^p(M)_+^1$ and hence the strict convexity of $L^p(N)_\sa$ implies that there are only two extreme points of $L^p(N)_+^1$ (see the argument of Lemma \ref{lem:isom-cone>Jord}), which gives $N = \BC$. 
Therefore, the hypothesis actually implies that the dimensions of both $M_\sa$ and $N_\sa$ are at least two. 

Let us first consider the case when $p\neq 2$, and define a map $\CF_p: \RP^{(4)}\to \RP$ by
$$\CF_p(a,b,c,d) := |a^p-b^p-c^p|+|d^p-b^p-c^p|.$$
For any $T\in L^p(M)_\sa$, we know that $|T|\in L^p(M)_+$.
We denote by $T_+$ and $T_-$ the positive part and the negative part of the self-adjoint operator $T$ respectively. 
As a closed operator, $T_\pm$ is the closure of $\frac{|T| \pm T}{2}$. 
Hence, $T_\pm=\frac{|T| \pm T}{2}$ as elements in $L_0(\check M, \tau)$.
This means that $T_\pm\in L^p(M)_+$ and satisfies $T_+T_- = T_-T_+ = 0$.
Now, we know from (S2) and (S3) that
$$
\CF_p(\|T\|, \|T_+\|, \|T_-\|, \|T_+ + T_-\|) = 0.
$$

Conversely, suppose that $T\in L^p(M)_\sa$ and $R,S\in L^p(M)_+$ such that $T = R -S$ and
$$\CF_p(\|T\|, \|R\|, \|S\|, \|R+S\|) = 0.$$
Then by (S2) and (S4), we have $RS=0$.
Therefore, $(R+S)^2 = (R-S)^2 = T^2$, which implies that $R+S = |T|$ (because $R+S$ is a positive self-adjoint operator; see e.g. \cite[Theorem 12]{Bernau}). 
Consequently,
$$R = T_+ \qquad \text{and} \qquad S = T_-.$$
This means that $L^p(M)_+$ is a $\CF_p$-generating cone of $L^p(M)_\sa$ and we may apply Lemma \ref{lem:isom-cone>Jord} to extend $\Phi$ to a real linear isometry from $L^p(M)_\sa$ onto $L^p(N)_\sa$.

For $p=2$, we know from the proof of Lemma \ref{lem:isom-cone>Jord} that $\Phi(0) = 0$ and hence $\Phi$ restricts to a bijective isometry from the set of norm-one elements in $L^2(M)_+$ onto that of $L^2(N)_+$.
Now, the conclusion follows from \cite[Proposition 3.7]{LNW-tran-prob}.
\hfill $\Box$\\


\subsection{The proof for the case $p\in (1,+\infty)$ and the presentation of the main result}

\begin{thm}\label{thm:main}
Let $M$ and $N$ be two von Neumann algebras and let $p\in [1,+\infty]$.
If there is a bijective isometry $\Phi: L^p(M)_+^1 \to L^p(N)_+^1$, then $M$ and $N$ are Jordan $^*$-isomorphic.
\end{thm}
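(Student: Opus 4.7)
The proof is a case analysis on $p$, where in each range the isometry $\Phi$ is promoted to a structure already known to be a complete Jordan $^*$-invariant.  For $p = +\infty$, Proposition \ref{prop:p=infty} supplies a bijective linear isometry $\Psi \colon M_\sa \to N_\sa$, and \cite[Theorem 2]{Kad52} then produces a Jordan $^*$-isomorphism via $x \mapsto \Psi(1)\Psi(x)$, upon identifying $M$ with $L^\infty(M)$.  For $p = 1$, the conclusion is exactly Proposition \ref{prop:p=1}, after identifying $L^1(M)_+^1$ with $M_{*,+}^1$.

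The substantive case is $p \in (1, +\infty)$.  If $M = \BC$, then the extreme-point count at the start of the proof of Proposition \ref{prop:ext-Lp} forces $N = \BC$ as well, and the conclusion is trivial.  Otherwise Proposition \ref{prop:ext-Lp} extends $\Phi$ to a real linear isometry $\tilde \Phi \colon L^p(M)_\sa \to L^p(N)_\sa$, and tracing through Lemma \ref{lem:isom-cone>Jord} shows that $\tilde \Phi$ sends $L^p(M)_+$ onto $L^p(N)_+$.  The remaining task is to convert this cone-preserving real linear isometry into the hypothesis of an existing Jordan $^*$-invariance theorem.

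For $p = 2$, the complex-linear extension $\hat \Phi(T + \rmi S) := \tilde \Phi(T) + \rmi \tilde \Phi(S)$ is automatically a Hilbert space unitary that preserves the natural self-dual positive cone of the standard form; Connes' standard form characterisation then delivers the Jordan $^*$-isomorphism.  For $p \in (1, +\infty) \setminus \{2\}$, I would first observe, using the real linearity and isometry of $\tilde \Phi$ together with properties (S3)--(S4), that $\tilde \Phi$ preserves biorthogonality of supports on $L^p(M)_+$: from $\supp S \cdot \supp T = 0$ one gets $\|\tilde \Phi(S) \pm \tilde \Phi(T)\|_p^p = \|S\|_p^p + \|T\|_p^p$, whence (S4) forces the supports of $\tilde \Phi(S)$ and $\tilde \Phi(T)$ to be orthogonal.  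From this one can either extend $\tilde \Phi$ complex-linearly and verify that it is an $L^p$-isometry in order to invoke Sherman's theorem \cite{Sher05}, or pass to the induced bijection on projection lattices (via supports of positive $L^p$-elements), check it is an orthoisomorphism in the sense of Lemma \ref{lem:orth-isom}, and conclude by Lemma \ref{lem:Dye}.

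The main anticipated obstacle lies in the complex-linear route when $p \neq 2$: $\|T + \rmi S\|_p$ is not determined by $\|T\|_p$ and $\|S\|_p$ alone, so checking that the complex-linear extension is an $L^p$-isometry requires leveraging polar decompositions in $L^p(M)$ and the support compatibility just noted.  The orthoisomorphism route is conceptually cleaner; its one technical demand is ensuring that every projection of $M$ (and symmetrically of $N$) arises as the support of some positive element of $L^p(M)$, so that the induced map is defined on the entire projection lattice and Dye's theorem can be applied.
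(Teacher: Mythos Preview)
Your outline is correct and aligns with the paper's proof: the cases $p=\infty$ and $p=1$ are handled exactly as you say, and for $p\in(1,\infty)\setminus\{2\}$ the paper also takes your ``orthoisomorphism route'' via Lemmas \ref{lem:orth-isom} and \ref{lem:Dye}, not the Sherman/complex-extension alternative.

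The one execution detail worth flagging is how the paper reaches Lemma \ref{lem:orth-isom}. Rather than work directly on the norm-one part of $L^p(M)_+$ and define a map on projections by $\supp T \mapsto \supp \tilde\Phi(T)$ (which requires not only the surjectivity you mention but also well-definedness), the paper composes with the $p$th-power map $\Lambda_p$: since $\Lambda_p$ bijects the norm-one sphere of $L^p(M)_+$ onto the normal state space $\KS(M)$ and preserves supports by (S1), the map $\hat\Phi:=\Lambda_p\circ\Phi\circ\Lambda_p^{-1}$ is a bijection $\KS(M)\to\KS(N)$. Your biorthogonality computation (via (S3)--(S4), affinity, and $\Phi(0)=0$) transfers verbatim to show $\hat\Phi$ is biorthogonality preserving, and then Lemma \ref{lem:orth-isom} applies literally as stated. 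This sidesteps both technical demands in one move. For $p=2$ the paper simply cites \cite[Corollary 3.11]{LNW-tran-prob} (obtained from Proposition \ref{prop:ext-Lp} and \cite[Proposition 3.7]{LNW-tran-prob}) in place of your Connes/standard-form argument.
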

\begin{proof}
The cases of $p=+\infty$ and $p=1$ are proved in Proposition \ref{prop:p=infty} (together with \cite[Theorem 2]{Kad52}) and Proposition \ref{prop:p=1}, respectively (through the canonical identifications of $L^1(M)$ and $L^\infty(M)$ with $M_*$ and $M$).
Moreover, the case of $p=2$ is already established in \cite[Corollary 3.11]{LNW-tran-prob} 
(due to Proposition \ref{prop:ext-Lp} and \cite[Proposition 3.7]{LNW-tran-prob}).

Now, we consider $p\in (1,+\infty)\setminus \{2\}$.
Without loss of generality, we assume that either $M\neq \BC$ or $N\neq \BC$. 
By Proposition \ref{prop:ext-Lp}, $\Phi$ is an affine map with $\Phi(0)= 0$. 
Furthermore, it follows from Relation \eqref{eqt:defn-Lp-norm} that $\Lambda_p$ induces a bijection from $L^p(M)_+^1$ onto $L^1(M)_+^1$ that sends the norm one part of $L^p(M)_+$  onto the norm one part, $\KS(M)$, of $L^1(M)_+$. 
Hence, $\Phi$ induces a bijection $\hat \Phi: \KS(M) \to \KS(N)$ with $\hat \Phi(A) = \Lambda_p(\Phi(\Lambda_p^{-1}(A))$. 
For any $A,B\in \KS(M)$,  it follows from (S1), (S3) and (S4) that 
$$\supp A \cdot \supp B = 0 \text{ if and only if } \left\|\frac{\Lambda_p^{-1}(A)}{2}+\frac{\Lambda_p^{-1}(B)}{2}\right\|_p^p = \left\|\frac{\Lambda_p^{-1}(A)}{2} - \frac{\Lambda_p^{-1}(B)}{2}\right\|_p^p = 2^{1-p}.$$
As $\Phi$ is both isometric and affine (as well as $\Phi(0) = 0$), we know that $\hat \Phi$ is a biorthogonality preserving bijection between the normal state spaces of $M$ and $N$ (through the identification $L^1(M) = M_*$).
The conclusion now follows from Lemmas \ref{lem:orth-isom} and \ref{lem:Dye}.
\end{proof}

\section*{Acknowledgement}

The authors are supported by National Natural Science Foundation of China (11471168) and Taiwan
	MOST grant (104-2115-M-110-009-MY2).


\bibliographystyle{plain}

\end{document}